\documentclass[12pt]{amsart}

\usepackage{amscd}

\topmargin0in

\textheight8.5in

\oddsidemargin0.2in

\evensidemargin0.2in

\textwidth6in

\advance\hoffset by -0.5 truecm

\usepackage{amssymb}

\newtheorem{Theorem}{Theorem}[section]

\newtheorem{Lemma}[Theorem]{Lemma}

\newtheorem{Corollary}[Theorem]{Corollary}

\advance\hoffset by -0.5 truecm

\usepackage{amssymb}
\usepackage{graphicx}
\usepackage{subfigure}

\newtheorem{Proposition}[Theorem]{Proposition}

\newtheorem{Definition}[Theorem]{Definition}

\def\V{\mbox{Var}}

\def\R\re

\def\V{\bf V}

\def \re{{\mathbb R}}

\def \0{\lambda_{0}}

\begin{document}
\hyphenation{Ya-ma-be}
\title{Stable solutions of the Yamabe equation on non-compact manifolds}

\author[J. Petean]{Jimmy Petean}

\address{CIMAT, A.P. 402, 36000, Guanajuato. Gto., M\'{e}xico.}
\email{jimmy@cimat.mx}

\author[J. M. Ruiz]{Juan Miguel Ruiz}
 \address{ENES UNAM \\
           37684 \\
          Le\'on. Gto. \\
          M\'exico.}
\email{mruiz@enes.unam.mx}

\thanks{The authors are supported  supported by grant 220074 of CONACYT}



\begin{abstract}  We consider the Yamabe equation on a complete non-compact Riemannian  manifold
and study the condition of stability of solutions. If $(M^m ,g)$ is a closed manifold of constant positive scalar curvature, which we normalize to be $m(m-1)$, we consider the
Riemannian product with the $n$-dimensional Euclidean space: $(M^m \times \re^n , g+ g_E )$. And study, as in \cite{Akutagawa}, the solution of the Yamabe equation 
which depends only on the Euclidean factor. We show that there exists a constant $\lambda (m,n)$
such that the solution is stable if and only if $\lambda_1 \geq \lambda (m,n)$, where $\lambda_1$ is the first positive eigenvalue  of $-\Delta_g$. We compute
$\lambda (m,n)$ numerically for small values of $m,n$ showing in these cases that the Euclidean minimizer is stable in the case $M=S^m $ with the metric of constant curvature. 
This implies that the same
is true for any closed manifold with a Yamabe metric.
\end{abstract}

\maketitle

\section{Introduction}

Let $(X^N ,h)$ be a complete non-compact  Riemannian manifold of dimension $N\geq 3$, without boundary. We consider the $h$-Yamabe functional given by:

$$Y_h (u) = \frac{\int_X \ \left(   a_N \| \nabla u \|^2 + s_h u^2 \right) \  dv_h }{ (\int_X u^p dv_h )^{2/p}} = \frac{E_h (u)}{ \| u \|_p^2} .$$

\noindent
where $a_N = \frac{4(N-1)}{N-2}$, $p=p_N = \frac{2N}{N-2}$, $s_h$ will denote the scalar curvature of the metric $h$ and $dv_h$ its volume element.
The function $u \neq 0$ is assumed to be in the Sobolev space  $L_1^2 (X)$. We will always assume that $(X,h)$ is such that the Sobolev embedding
 $L_1^2 (X) \subset L^p (X)$ holds. This is true for instance if the injectivity radius is positive and the Ricci curvature is bounded below \cite[Corollary 3.19]{Hebey}.

 The Yamabe constant of $(X,h)$ is defined as

$$Y(X,h)= \inf_{u\in L_1^{2} (X) - \{ 0 \} } Y_h (u) .$$

When $s_g \geq 0$ this number is always finite (and non-negative) and it is bounded above by the Yamabe constant of $(S^N , g_0^N )$, where $g_0^N$ is
the metric of constant sectional curvature 1 on $S^N$, by the well known local argument of T. Aubin \cite{Aubin}. 

Although Yamabe constants have been more often considered and are better understood in the case of
closed manifolds, the study of the constants for open Riemannian manifolds is also of interest by itself and in connection with  the closed case. A general study of Yamabe constants
of noncompact manifolds can be found in \cite{Grose}. See also \cite{Botvinnik, Akutagawa, Schoen}

Our main motivation is to understand the Yamabe constants of certain non-compact Riemannian manifolds which play a central 
role in the study of the Yamabe invariants of closed manifolds (in particular when studying how the invariants behave under surgery,
see \cite{Ammann}). In the present article we will consider the stability of solutions of the
Yamabe equation. A solution $f$ of the $h$-Yamabe equation is a solution of the Euler-Lagrange equation of $Y_h$ which means
that for each $u \in  C_0^{\infty} (X) $ the function $H_u (t) = Y_h (f + tu)$ verifies 
$H_u ' (0) =0$. The solution $f$ is called stable if for every $u$, $H_u '' (0) \geq 0$. The condition is well understood in the closed case:
$f$ being a solution of the Yamabe equation means that $f^{p-2} h$ has constant scalar curvature and it is stable if and only if
$s_{f^{p-2} h}  \leq (N-1)  \lambda_1 (f^{p-2} h )$, where $\lambda_1$ is the first positive eigenvalue of
the positive Laplacian of the Riemannian metric. This condition can be expressed also in terms of the original metric $h$, but in the closed 
case there is no reason to use such expression. A typical situation of interest in the complete noncompact case is a  metric of constant positive 
scalar curvature and infinite volume for which one is interested in computing the Yamabe constant. A solution of the Yamabe equation 
gives a metric of constant scalar curvature which 
is non-complete, of finite volume. Since the analysis in such a manifold is not well understood it seems more reasonable to work on the original
metric.
Therefore we will begin this article by studying the stability condition on a non-compact complete Riemannian manifold of constant positive scalar curvature. 

We introduce the following invariant:

\begin{Definition} Let $(X,h)$ be a complete Riemannian manifold of constant positive scalar curvature and 
$f\in C_+^{\infty} (X) \cap L_1^2 (X)$ be a positive smooth critical point of $Y_h$. 
Let $N(h,f) =  \{u \in L_1^2 (X) - \{ 0 \}: \int_X  f^{p-1} u \  dv_h =0 \}$ and  define

$$\alpha (X,h,f) = \inf_{u\in N(h,f) } \ \frac{E_h (u)}{ \int_X   f^{p-2} u^2 dv_h } .$$

\end{Definition}

With this notation the condition for stability reads:

\begin{Theorem}  Let $(X,h)$ be a complete Riemannian manifold of constant positive scalar curvature and $f\in C_+^{\infty} (X) \cap L_1^2 (X)$ 
be a positive smooth critical point of $Y_h$. 
$f$ is stable if and only if 

$$\alpha (X,h,f) \geq  (p-1) \frac{E_h (f)}{\| f \|_p^p} .$$

\end{Theorem}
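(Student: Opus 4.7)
The plan is a direct calculation of $H_u''(0)$ combined with a reduction, using the scale invariance of $Y_h$, to perturbations $u$ lying in $N(h,f)$. Writing $Y_h(f+tu)=F(t)/G(t)$ with $F(t)=E_h(f+tu)$ and $G(t)=\|f+tu\|_p^2$, the critical-point condition $H_u'(0)=0$ for every $u\in C_0^\infty(X)$ translates, after the standard integration by parts, into the weak Yamabe-type equation
\[-a_N\Delta f + s_h f = \mu f^{p-1}, \qquad \mu \df \frac{E_h(f)}{\|f\|_p^p},\]
so the Lagrange multiplier at any critical point is exactly $\mu=E_h(f)/\|f\|_p^p$.

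Because $H_u'(0)=0$, the quotient rule at $t=0$ simplifies to $H_u''(0)=[F''(0)G(0)-F(0)G''(0)]/G(0)^2$. A direct expansion with $F''(0)=2E_h(u)$ and $G''(0)$ computed from $G=P^{2/p}$, $P(t)=\|f+tu\|_p^p$, shows that the only contribution to $G''(0)$ that does not involve the factor $\int f^{p-1}u\,dv_h$ is proportional to $(p-1)\int f^{p-2}u^2\,dv_h$. Consequently, for $u\in N(h,f)$ one obtains
\[\tfrac12 G(0)\,H_u''(0) = E_h(u) - (p-1)\mu \int_X f^{p-2}u^2\,dv_h,\]
so that $H_u''(0)\geq 0$ on all of $N(h,f)$ is precisely equivalent to the inequality $\alpha(X,h,f)\geq(p-1)\mu$ in the statement.

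To reduce an arbitrary $u$ to one in $N(h,f)$ we exploit the $0$-homogeneity of $Y_h$. Given $u$, set $c=\|f\|_p^{-p}\int f^{p-1}u\,dv_h$ and $v=u-cf\in N(h,f)$. Then
\[Y_h(f+tu)=Y_h\bigl((1+tc)f+tv\bigr)=Y_h\!\Bigl(f+\tfrac{t}{1+tc}v\Bigr),\]
i.e.\ $H_u(t)=H_v(\tau(t))$ with $\tau(t)=t/(1+tc)$, and since $\tau(0)=0$, $\tau'(0)=1$ and $H_v'(0)=0$, the chain rule gives $H_u''(0)=H_v''(0)$. Thus stability against all $u\in C_0^\infty(X)$ is equivalent to the non-negativity of $H_v''(0)$ for every $v$ in the image of $u\mapsto u-cf$, $u\in C_0^\infty(X)$. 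The main (and essentially only non-routine) step is the density/continuity argument showing that this image is dense in $N(h,f)\cap L_1^2(X)$, and that the inequality passes to the $L_1^2$-limit; the former follows from $C_0^\infty$-density in $L_1^2$ together with $c_n=\|f\|_p^{-p}\int f^{p-1}u_n\to 0$ when $u_n\to v$ in $L_1^2$, while the latter uses continuity of $E_h$ on $L_1^2$ and of $v\mapsto\int f^{p-2}v^2\,dv_h$ on $L^p$, obtained via H\"older with conjugate exponents $p/(p-2)$ and $p/2$ together with the standing Sobolev embedding $L_1^2\hookrightarrow L^p$.
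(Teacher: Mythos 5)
Your proof is correct, and the overall architecture (second variation formula plus reduction to the constrained set $N(h,f)$) matches the paper's, but your reduction step is carried out by a genuinely different mechanism. The paper (Corollaries 2.3 and 2.4) first records the full second-variation inequality including the term $(2-p)\|f\|_p^{-2p}\bigl(\int f^{p-1}u\bigr)^2$, and then, writing $v=u+cf$, verifies the equivalence by an explicit energy computation: integration by parts together with the Yamabe equation kills the cross term, giving $E(v)=E(u)+c^2E(f)$, after which the general inequality is checked algebraically. You instead exploit the $0$-homogeneity of $Y_h$ to write $H_u(t)=H_v\bigl(t/(1+tc)\bigr)$ with $v=u-cf\in N(h,f)$, so that $\tau'(0)=1$ and $H_v'(0)=0$ give $H_u''(0)=H_v''(0)$ outright; this replaces the paper's algebra with a one-line reparametrization and makes transparent why the $\bigl(\int f^{p-1}u\bigr)^2$ term can be discarded. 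Two small caveats: you should note that $H_v'(0)=0$ for $v\in L_1^2(X)$ (not merely $v\in C_0^\infty$) requires extending the weak Yamabe equation to $L_1^2$ test functions, which follows by density and the H\"older/Sobolev bounds you already invoke; and your explicit density argument for passing from $C_0^\infty$ perturbations to all of $N(h,f)$ is actually more careful than the paper, which dismisses that direction as clear. With those observations your argument is complete and, in the reduction step, cleaner than the published one.
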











To study stability of solutions of the Yamabe equation on open manifolds one would need to compute the invariant $\alpha$. 

The example we will be most interested in is  the case $(M^m \times \re^n , g + g_E^n )$ where $M^m $ is closed and $g$ has constant scalar curvature which we
normalize to be $m(m-1)$. One can restrict the functional to functions which depend only on the Euclidean variable and define as
in \cite{Akutagawa}

$$Y_{\re^n} (M\times \re^n , g + g_E^n) = \inf_{u \in L_1^2 (\re^n  ) - \{ 0 \} } Y_{ g + g_E^n  } (u) .$$

In \cite{Akutagawa} $Y_{\re^n} (M\times \re^n , g + g_E^n)$ is computed in terms of the best constant of the classical
Gagliardo-Nirenberg inequality. In particular there is a unique   $Y_{\re^n}$-minimizer $f$ which is a radial, decreasing, smooth 
function and the scalar curvature of $f^{p-2} (g + g_E^n )$ is $m(m-1)$. It follows that $\frac{E_h (f)}{\| f \|_p^p} =m(m-1)$. In
Section 2 we will show that the there is a  minimizer for $\alpha (M\times \re^n , g + g_E^n , f)$ and then in Section 4
we will show that it  is of the form $a(y) b(x)$ where $-\Delta_g a = \lambda_1 a$, $\lambda_1$ is the first positive eigenvalue
of $-\Delta_g$. Then we see from Theorem 1.2 that :

\begin{Theorem} Let $(M^m ,g)$ be a closed Riemannian manifold of constant scalar curvature 
$m(m-1)$ and $f$ the $Y_{\re^n}$-minimizer normalized so that the scalar curvature of $f^{p-2} (g + g_E^n )$ is $m(m-1)$.
$f$ is a stable critical point of $Y_{ g + g_E^n}$ if and only if

\begin{equation}
\inf_{b\in L_1^2 (\re^n ) -\{ 0 \} } \ \left( \frac{ \int_{\re^n} (a_N \| \nabla b \|_2^2 + m(m-1) b^2)}{\int_{\re^n} f^{p-2}b^2} + a_N \lambda_1 \frac{\int_{\re^n} b^2}{\int_{\re^n} f^{p-2}b^2} 
\right) \  \geq (p-1) m (m-1)
\end{equation}

\end{Theorem}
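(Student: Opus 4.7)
The plan is to derive Theorem 1.3 by combining Theorem 1.2 with two auxiliary facts that the authors promise for later sections: the existence of a minimizer of $\alpha(M\times\re^n,g+g_E^n,f)$ (Section 2) and the fact that any such minimizer separates as $u(y,x)=a(y)b(x)$ with $-\Delta_g a=\lambda_1 a$ (Section 4). Granting these, Theorem 1.3 reduces to a Fubini-type computation of the ratio defining $\alpha$.

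First I would record the normalization. The hypothesis that $f^{p-2}(g+g_E^n)$ has scalar curvature $m(m-1)$ means that $f$ satisfies $-a_N\Delta_h f+s_h f=m(m-1)f^{p-1}$; multiplying by $f$ and integrating yields $E_h(f)=m(m-1)\|f\|_p^p$, so $E_h(f)/\|f\|_p^p=m(m-1)$. Theorem 1.2 then identifies stability of $f$ with
$$\alpha(M\times\re^n,g+g_E^n,f)\;\geq\;(p-1)\,m(m-1).$$
So the whole task is to evaluate the left-hand side in terms of an infimum over $b\in L_1^2(\re^n)$.

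Next I would exploit the product form. Since $a$ is a non-constant eigenfunction of $-\Delta_g$, it is $L^2$-orthogonal to the constants, so $\int_M a\,dv_g=0$, and for any $b$
$$\int_X f^{p-1}(ab)\,dv_h\;=\;\Bigl(\int_M a\,dv_g\Bigr)\Bigl(\int_{\re^n}f^{p-1}b\,dv_E\Bigr)\;=\;0;$$
the side condition in the definition of $\alpha$ is thus automatic, and $\alpha$ becomes an unconstrained infimum over $b$. Because $\nabla a$ and $\nabla b$ live in complementary factors, $\|\nabla(ab)\|_h^2=b^2|\nabla_g a|^2+a^2|\nabla b|^2$; moreover $s_{g+g_E^n}=m(m-1)$. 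Applying Fubini together with the identity $\int_M|\nabla_g a|^2\,dv_g=\lambda_1\int_M a^2\,dv_g$ gives
$$E_h(ab)\;=\;\Bigl(\int_M a^2\,dv_g\Bigr)\!\left[\int_{\re^n}\!\bigl(a_N\|\nabla b\|^2+m(m-1)b^2\bigr)dv_E\,+\,a_N\lambda_1\!\int_{\re^n}\! b^2\,dv_E\right]\!,$$
while the denominator factors as $\int f^{p-2}(ab)^2\,dv_h=(\int_M a^2\,dv_g)(\int_{\re^n}f^{p-2}b^2\,dv_E)$. The common factor $\int_M a^2\,dv_g$ cancels and the resulting infimum over $b$ is exactly the left-hand side of (1). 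Substituting into the stability inequality of the first step proves the theorem.

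The main obstacle in this chain is not the Fubini computation but the two supporting inputs from Sections 2 and 4. Existence of an $\alpha$-minimizer on the non-compact product needs care because minimizing sequences may concentrate or escape to infinity in the Euclidean factor, and one must rule this out; then a spectral decomposition on the closed factor has to be used to force the $M$-dependence of the minimizer to lie entirely in the first eigenspace of $-\Delta_g$. Once those are in hand, Theorem 1.3 follows by the bookkeeping above.
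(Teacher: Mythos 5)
Your proposal matches the paper's own route: Theorem 1.2 together with the normalization $E_h(f)=m(m-1)\|f\|_p^p$, the existence of an $\alpha$-minimizer (the paper's Proposition 2.5), the separation of the minimizer as $u=a(y)b(x)$ with $-\Delta_g a=\lambda_1 a$ (the paper's Lemma 4.1), and the same Fubini computation of $E_h(ab)$ with cancellation of $\int_M a^2\,dv_g$. One small caution: the paper proves the separation only under the hypothesis $\alpha<(p-1)m(m-1)$ (the case $a$ constant being excluded there via the stability of $f$ for purely Euclidean variations), so your claim that ``$\alpha$ becomes an unconstrained infimum over $b$'' is not established unconditionally --- but the equivalence still closes, since the inequality $\alpha\le\inf_b(\cdots)$ is trivial from testing with $u=ab$, and the separation lemma is needed only in the unstable case.
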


In order to use the previous theorem we will consider the function:

\begin{equation}
\lambda \mapsto A(\lambda )=  \inf_{b\in L_1^2 (\re^n ) -\{ 0 \} } \ \left( \frac{ \int_{\re^n} (a_N \| \nabla b \|_2^2 + m(m-1) b^2)}{\int_{\re^n} f^{p-2}b^2} + a_N \lambda \frac{\int_{\re^n} b^2}{\int_{\re^n} f^{p-2}b^2} \right)
\end{equation}

In section 4 we will prove that $A(\lambda )$  is realized by a radial decreasing function and then deduce the following:

\begin{Corollary} The infimum is a strictly increasing function of $\lambda$. Therefore there exists a unique value
of $\lambda >0$ such that 
$A(\lambda ) = (p-1)m(m-1) .$

\end{Corollary}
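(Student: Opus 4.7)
The plan is to establish two facts: strict monotonicity of $A$, which yields uniqueness, and suitable boundary behavior at $\lambda=0$ and $\lambda\to\infty$ together with continuity, which gives existence of the crossing value by the intermediate value theorem.

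\emph{Strict monotonicity.} For each $b\in L_1^2(\re^n)-\{0\}$, the quantity inside the infimum defining $A(\lambda)$ is affine in $\lambda$ with slope $a_N \int_{\re^n} b^2 / \int_{\re^n} f^{p-2}b^2$. Since $f$ is a smooth radial decreasing function on $\re^n$, it is bounded by $f(0)<\infty$; hence $f^{p-2}\leq f(0)^{p-2}$ pointwise, and the slope is bounded below by $c := a_N f(0)^{-(p-2)}>0$ uniformly in $b$. For $\lambda<\mu$, the $\mu$-functional therefore exceeds the $\lambda$-functional by at least $(\mu-\lambda)c$ for every admissible $b$; taking infima yields $A(\mu)\geq A(\lambda)+(\mu-\lambda)c>A(\lambda)$. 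This delivers strict monotonicity and, a fortiori, uniqueness of the crossing value.

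\emph{Value at $\lambda=0$ and at infinity.} Since $f$ solves the Yamabe equation for $h=g+g_E^n$ with $s_h=m(m-1)$ and $s_{f^{p-2}h}=m(m-1)$, it satisfies $-a_N\Delta f + m(m-1)f = m(m-1)f^{p-1}$ on $\re^n$. Multiplying by $f$ and integrating gives $\int_{\re^n}(a_N\|\nabla f\|_2^2 + m(m-1)f^2) = m(m-1)\int_{\re^n} f^p$. Testing $A(0)$ with $b=f$ therefore yields $A(0)\leq m(m-1)<(p-1)m(m-1)$, since $p>2$. On the other hand, the uniform lower bound from the monotonicity step gives $A(\lambda)\geq A(0)+\lambda c\to\infty$ as $\lambda\to\infty$.

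\emph{Continuity and conclusion.} As the infimum over $b$ of a family of affine functions of $\lambda$, $A$ is concave on $[0,\infty)$, hence continuous on $(0,\infty)$. By the intermediate value theorem there exists $\lambda>0$ with $A(\lambda)=(p-1)m(m-1)$, and strict monotonicity makes this value unique. The only nontrivial ingredient is the uniform positive lower bound on the $\lambda$-coefficient of the functional: this would fail if $f$ were unbounded, so the main obstacle—mild, but essential—is to know that the Gagliardo--Nirenberg extremal $f$ is smooth and radially decreasing, which forces $\|f\|_\infty=f(0)<\infty$.
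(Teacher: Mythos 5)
Your proof is correct, but it reaches strict monotonicity by a genuinely different route than the paper. The paper first proves (Lemma 4.2, via radial decreasing rearrangement and compactness of the radial Sobolev embedding) that $A(\lambda)$ is attained by a minimizer, and then deduces strictness by evaluating the $\lambda$-functional at the minimizer for the larger parameter: if $b_\mu$ realizes $A(\mu)$ and $\lambda<\mu$, then $A(\lambda)\le F_\lambda(b_\mu)<F_\mu(b_\mu)=A(\mu)$. You instead observe that the coefficient of $\lambda$, namely $a_N\int b^2/\int f^{p-2}b^2$, is bounded below by $a_N f(0)^{2-p}>0$ uniformly in $b$ because the Gagliardo--Nirenberg extremal $f$ is radial decreasing, hence bounded by $f(0)$; this yields the quantitative estimate $A(\mu)\ge A(\lambda)+(\mu-\lambda)a_N f(0)^{2-p}$. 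Your argument is more elementary in that it does not require existence of a minimizer (which is genuinely needed somewhere, since an infimum of strictly increasing affine functions need not be strictly increasing without such a uniform slope bound), and it gives the divergence $A(\lambda)\to\infty$ for free, whereas the paper asserts it without comment; note the paper records the same pointwise bound $f\le f(0)$ in the statement of Lemma 4.2. The paper loses nothing by routing the corollary through attainment, since the minimizer is needed anyway for the subsequent ODE characterization of $\lambda(m,n)$. The computation $A(0)\le m(m-1)<(p-1)m(m-1)$ via $b=f$ and the integrated Yamabe equation is the same in both. One small point to tighten: for the intermediate value argument you need $A(\lambda)<(p-1)m(m-1)$ for some $\lambda>0$, not merely at $\lambda=0$; this is immediate from the same test function, since $A(\lambda)\le m(m-1)+a_N\lambda\int_{\re^n} f^2/\int_{\re^n} f^p$ for every $\lambda\ge 0$.
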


We introduce the following constant which depends only on the dimensions $m,n$:

\begin{Definition} The value of $\lambda$ given by the previous corollary will be called $\lambda (m,n)$.
\end{Definition}

 We have

\begin{Theorem} Let $(M,g)$ be a closed Riemannian manifold of constant scalar curvature $m(m-1)$. Let 
$\lambda_1 >0$ be the first positive eigenvalue of $-\Delta_g$. Then the metric $f^{p-2} (g+g_E^n )$ is stable if and only if $\lambda_1 \geq \lambda (m,n)$.
\end{Theorem}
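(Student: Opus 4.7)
The plan is to observe that Theorem 1.6 is a direct reformulation of Theorem 1.3, packaged via the monotonicity statement of Corollary 1.4. Since the two prior results have absorbed all of the substantive analysis, the proof at this step reduces to a short chain of equivalences, which I would lay out explicitly.

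First I would rewrite the left-hand side of the stability inequality (1) in Theorem 1.3 as a function of $\lambda_1$: by direct inspection of the definition (2), the two expressions are literally equal when one takes $\lambda = \lambda_1$. Hence Theorem 1.3 asserts
\[
f\text{ is stable} \iff A(\lambda_1) \geq (p-1)\,m(m-1).
\]
Next I would invoke Corollary 1.4: the map $\lambda \mapsto A(\lambda)$ is strictly increasing on $(0,\infty)$, and $\lambda(m,n)$ is defined as the unique value at which $A(\lambda(m,n)) = (p-1)\,m(m-1)$. By strict monotonicity,
\[
A(\lambda_1) \geq A(\lambda(m,n)) \iff \lambda_1 \geq \lambda(m,n).
\]
Chaining the two equivalences gives the statement of Theorem 1.6.

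The main obstacle is therefore not located in this theorem at all, but in its two ingredients. Theorem 1.3 requires establishing that the minimizer of $\alpha(M\times\re^n, g+g_E^n, f)$ can be taken of separated form $a(y)b(x)$ with $-\Delta_g a = \lambda_1 a$, which is the genuinely delicate step. Corollary 1.4 requires showing that $A(\lambda)$ is realized by a radial decreasing function so that strict monotonicity in $\lambda$ can be deduced (using the fact that the ratio $\int b^2 / \int f^{p-2}b^2$ is strictly positive on any nonzero minimizer). Once those are in place, the proof of Theorem 1.6 proceeds as above without further complication.
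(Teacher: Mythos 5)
Your proposal is correct and is essentially identical to the paper's own (very brief) argument: the authors likewise obtain Theorem 1.6 by combining Theorem 1.3, the strict monotonicity of $\lambda \mapsto A(\lambda)$ (Corollary 1.4, proved via the existence of a radial decreasing minimizer in Lemma 4.2), and the definition of $\lambda(m,n)$ as the unique solution of $A(\lambda) = (p-1)m(m-1)$. You also correctly locate the real work in the two ingredients rather than in this deduction.
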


Note that if $g$ is a Yamabe metric (a minimizer for the Yamabe functional) then in particular it is stable and as we mentioned before this
means that $\lambda_1 (g) \geq m$. Therefore we have

\begin{Theorem} If $m \geq \lambda (m,n) $ then for any Yamabe metric $g$ on the closed manifold $M$ the $Y_{\re^n}$-minimizer
on $ (M\times \re^n , g + g_E^n)$ is stable.
\end{Theorem}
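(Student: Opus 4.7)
The plan is to combine Theorem 1.6 with the well-known closed-manifold stability criterion already recalled in the introduction; the argument takes only three short steps.

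First, I would recall the criterion: on a closed manifold, a positive critical point $f$ of the Yamabe functional for the metric $g$ is stable if and only if $s_{f^{p-2}g} \leq (m-1)\lambda_1(f^{p-2}g)$. This is obtained from a direct computation of the second variation of $Y_g$ at $f$, after using the Lagrange constraint coming from the $L^p$-normalization to eliminate directions parallel to $f^{p-1}$; the resulting quadratic form is non-negative on the orthogonal complement precisely when the displayed spectral inequality holds. Since a Yamabe metric $g$ on $M$ is by definition a global minimizer of $Y_g$ on its conformal class, the constant function $f\equiv 1$ (which is a critical point because $g$ has constant scalar curvature) is in particular a stable critical point. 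Applying the criterion with $f\equiv 1$ and $s_g = m(m-1)$ gives $m(m-1) \leq (m-1)\lambda_1(g)$, hence
\[
\lambda_1(g) \geq m.
\]

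Second, I would simply chain inequalities: combining $\lambda_1(g)\geq m$ with the standing hypothesis $m \geq \lambda(m,n)$ yields $\lambda_1(g) \geq \lambda(m,n)$.

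Third, Theorem 1.6 then applies verbatim to $(M,g)$: since $\lambda_1(g) \geq \lambda(m,n)$, the normalized $Y_{\re^n}$-minimizer $f$ on $(M^m\times\re^n, g+g_E^n)$ is a stable critical point of $Y_{g+g_E^n}$, which is exactly the desired conclusion. There is no real obstacle: Theorem 1.7 is a direct corollary of Theorem 1.6, the only input being the spectral lower bound $\lambda_1(g)\geq m$ for Yamabe metrics with $s_g=m(m-1)$, and that bound is the content of the standard stability inequality applied to the minimizer $f\equiv 1$ of $Y_g$.
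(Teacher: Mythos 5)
Your proposal is correct and follows exactly the paper's own route: stability of the Yamabe minimizer $f\equiv 1$ on the closed factor gives $s_g=m(m-1)\leq (m-1)\lambda_1(g)$, hence $\lambda_1(g)\geq m\geq\lambda(m,n)$, and Theorem 1.6 concludes. No issues.
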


The condition on Theorem 1.7 can be checked numerically: a radial mimimizer for $A(\lambda (m,n) )$ is given by a solution of the
ordinary linear differential equation :

\begin{equation} u''(t) + \frac{n-1}{t} u'(t) +\left(  \frac{(p-1)m(m-1)}{a_N}  f^{p-2} - \left( \frac{m(m-1)}{a_N} + \lambda (m,n) \right) \right) u(t)=0
\end{equation}

\noindent
with $u(0)=1$, $u'(0)=0$. In the previous equation replace $\lambda (m,n)$ by a variable $\lambda$. 
As explained in Section 4 using Sturm comparison theory one can easily check that $\lambda (m,n)$ is
the unique value of $\lambda$ such that the solution of previous equation (with the given initial conditions) is positive and decreasing. 
For $\lambda >  \lambda (m,n)$ the solution has a local minimum and for $\lambda <  \lambda (m,n)$ has a 0 at finite time. The function $f$ 
can be computed numerically (see for instance the discussion in \cite{Akutagawa}) and then for a fixed $\lambda$ one can compute numerically
the solution of (3) and check whether  $\lambda <  \lambda (m,n)$ or $\lambda >  \lambda (m,n)$. 

In figure (\ref{K22}) we show the solutions of equation (3) for $m,n=2$. In this case one computes $\lambda (2,2) \approx 1.80405...$ and we display
solutions with $\lambda > \lambda (2,2)$ and $\lambda < \lambda (2,2)$.

Table \ref{tabla} gives the numerical computed value of $\lambda_{m,n}$, for low dimensions ($m+n\leq 9$): in these cases one has
$\lambda_{m,n} \leq m$.

\begin{figure}[h!]
\subfigure[ $\lambda< 1.80405$,  $u (t) =0$ for some $t>0$.]{
		\includegraphics[scale=0.200]{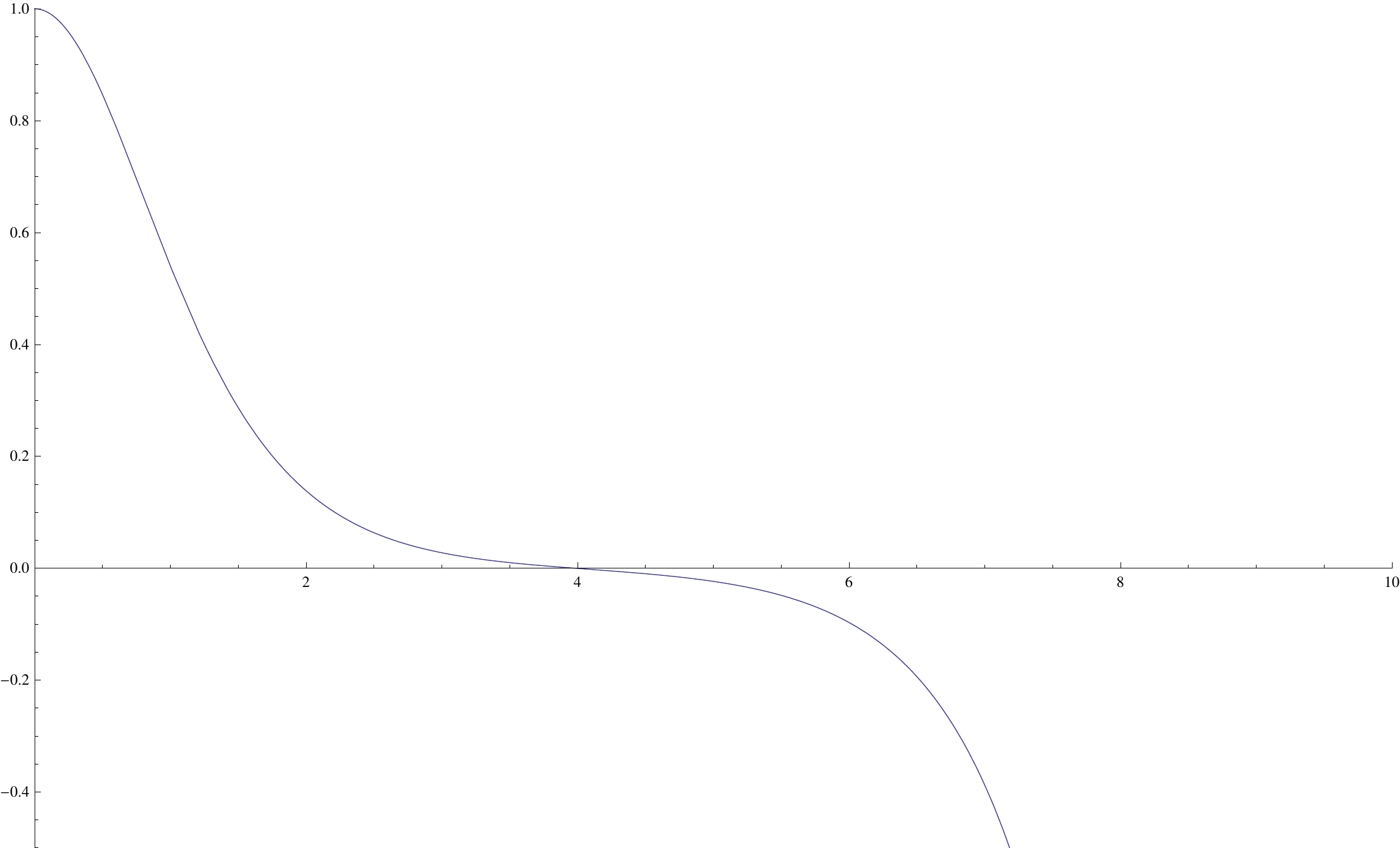}}
\hspace{5pt}	\subfigure[ $\lambda_{2,2}\approx 1.80405$, $u$ is always decreasing.
	]{
		\includegraphics[scale=0.200]{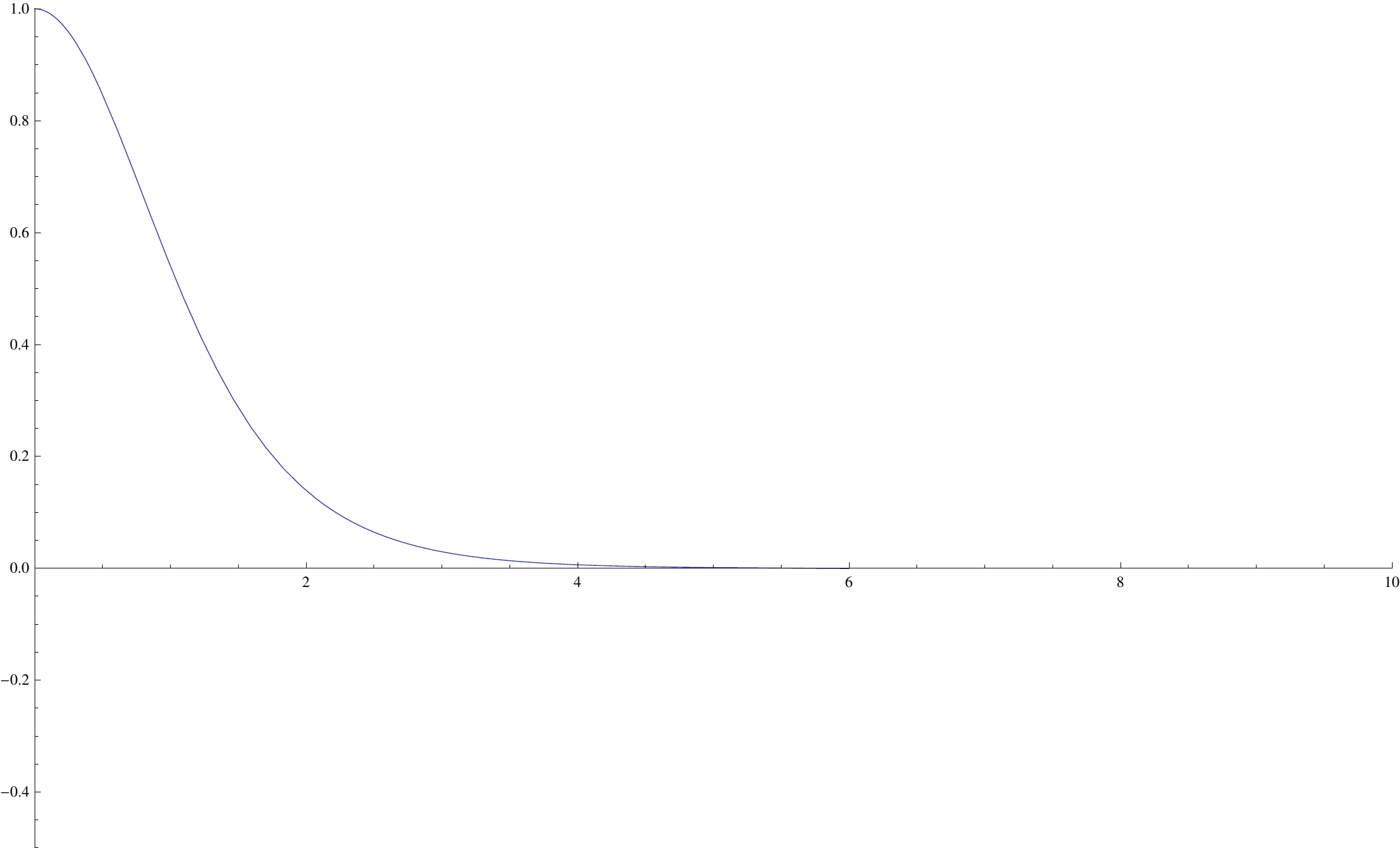}}	
	\hspace{5pt} \subfigure[ $\lambda> 1.80405$, $u $ has a local minimum at some $t > 0$.]{
		\includegraphics[scale=0.200]{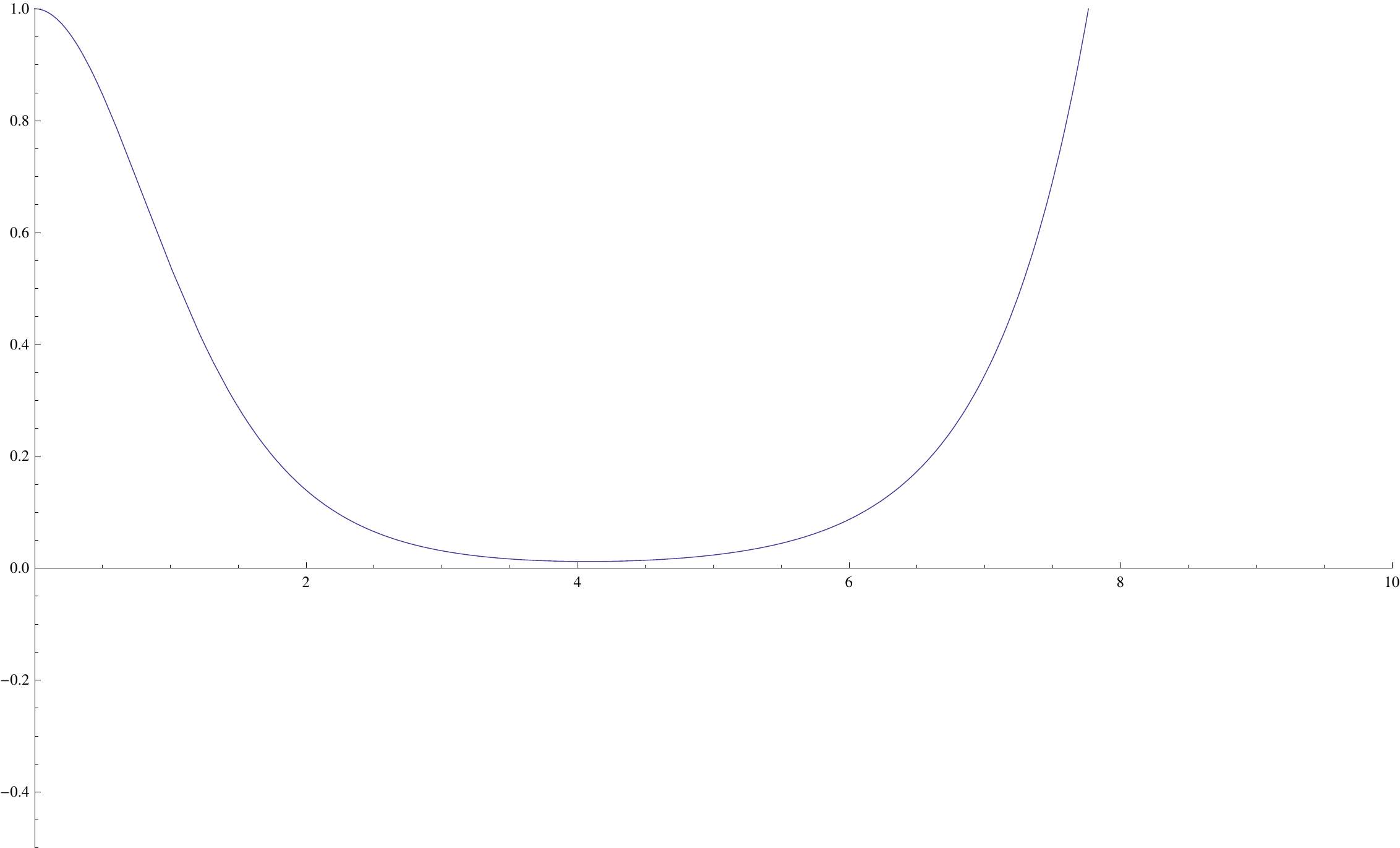}}

	\label{K22}
	\caption{For dimensions $m, n=2$, we display numerical solutions of equation (3). $\lambda_{2,2}\approx 1.80405$ .}
\end{figure}

\begin{table}[!ht]
 \caption {Numerical values of $\lambda_{m,n}$} 
 \label{tabla} 
 
  \centering
  
  \begin{tabular}{ccc}
   
 m	&	n		&$\lambda_{m,n}$	\\
 \hline
 2	&	2	&		1.8041	\\
 3	&	2	&	2.9183	\\
 2	&	3	&	1.6735	\\
 2	&	4	&		1.5823	\\
 3	&	3	&	2.8372	\\
 4	&	2	&		3.9553\\
 2	&	5	&	1.5145	\\
 \hline

  \end{tabular}
  \hspace{1em}
  \begin{tabular}{ccc}
 m	&	n		&$\lambda_{m,n}$	\\
 \hline
3	&	4	&	2.7669	\\
 4	&	3	&		3.9023	\\
 5	&	2	&	4.9718	\\
 2	&	6	&		1.4459	\\
 3	&	5	&	2.7070	\\
 4	&	4	&		3.8506	\\
 5	&	3	&		4.9348	\\
 \hline
  \end{tabular}
  \hspace{1em}
  \begin{tabular}{ccc}
  m	&	n		&$\lambda_{m,n}$	\\
 \hline
  6	&	2	&	5.9806	\\
  2	&	7	&	1.4165	\\
  3	&	6	&	2.6551	\\
  4	&	5	&		3.8028	\\
  5	&	4	&		4.8958	\\
  6	&	3	&	5.9533	\\
  7	&	2	&	6.9859	\\
   \hline
  \end{tabular}
\end{table}

 \textit{Acknowledgement:} The authors would like to thank Prof. Kazuo Akutagawa for many helpful comments on
the first version of the article. 
\section{Yamabe constants of open manifolds}

In this section we will discuss some preliminary definitions and results about Yamabe constants on open manifolds.
For an open  Riemannian manifold $(X^N ,h)$ we consider the $h$-Yamabe functional defined as

$$Y_h (u) = \frac{\int_X \ ( a_N \| \nabla u \|^2 + s_h u^2)  \  dv_h}{ (\int_X u^p dv_h )^{2/p}} $$ 

\noindent
where the function $u$ is taken to be (non-zero) in $L_1^2 (X)$ and recall that we are assuming that the 
Sobolev embedding $L_1^2 \subset L^p$ holds. The Yamabe constant
of $(X,h)$  is then defined as

$$Y(X,h) = \inf_u Y_h (u) $$

Let $E_h (u) = \int_X \ ( a_N \| \nabla u \|^2 + s_h u^2) \  dv_h$ so that $Y_h (u) = E_h (u) \|  u \|_p^{-2} $. 

A critical point of $Y_h$ is a solution of the corresponding Euler-Lagrange which is called the Yamabe equation:

\begin{equation}
- a_N \Delta_h f + s_h f = \lambda  f^{p-1}
\end{equation}

\noindent
with $\lambda \in \re$.

We begin now studying the {\it stability} of solutions of the Yamabe equation. The following is a standard computation:

\begin{Lemma} Let $(X,h)$ be an open manifold and $f$ be a smooth positive  critical point of $Y_h$. For any 
$u\in C_0^{\infty} (X)$ let $H_u (t) = Y_h (f+tu)$. Then $H_u '(0)=0$ and 

$$\frac{H'' (0)}{2} = \frac{ E_h (u) }{\| f \|_p^2}  - \frac{E_h (f)}{\|  f \|_p^4}
\left( (2-p) \| f \|_p^{2-2p} \left( \int_X f^{p-1}u  \right)^2 + (p-1) \| f \|_p^{2-p} \int_X f^{p-2}u^2  \right)  .$$

\end{Lemma}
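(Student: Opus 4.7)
The plan is a direct Taylor expansion of the quotient $H_u(t) = Y_h(f+tu) = E(t)/S(t)$, where $E(t) := E_h(f+tu)$ and $S(t) := \|f+tu\|_p^2$, followed by the quotient rule and an application of the Yamabe equation to kill $H_u'(0)$.

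First, since $E_h$ is a quadratic form, the Taylor expansion $E(t) = E_h(f) + tE'(0) + (t^2/2)E''(0)$ is exact, with $E''(0) = 2E_h(u)$ and linear coefficient $E'(0) = 2\int_X(a_N\langle\nabla f,\nabla u\rangle + s_h fu)\,dv_h$; because $u \in C_0^\infty(X)$, integration by parts (with no boundary contribution even on the open manifold $X$) converts this into $2\int_X(-a_N\Delta_h f + s_h f)u\,dv_h$. Next, setting $\phi(t) := \int_X(f+tu)^p\,dv_h$, differentiation under the integral sign gives $\phi'(0) = p\int_X f^{p-1}u\,dv_h$ and $\phi''(0) = p(p-1)\int_X f^{p-2}u^2\,dv_h$. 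Writing $S(t) = \phi(t)^{2/p}$ and applying the chain rule then yields
$$S'(0) = 2\|f\|_p^{2-p}\int_X f^{p-1}u\,dv_h,$$
$$S''(0) = 2(2-p)\|f\|_p^{2-2p}\left(\int_X f^{p-1}u\,dv_h\right)^{2} + 2(p-1)\|f\|_p^{2-p}\int_X f^{p-2}u^2\,dv_h.$$

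The quotient rule gives $H_u'(0) = (E'(0)S(0) - E(0)S'(0))/S(0)^2$. Because $f$ is a critical point of $Y_h$ it satisfies the Yamabe equation $-a_N\Delta_h f + s_h f = \lambda f^{p-1}$, and testing against $f$ identifies $\lambda = E_h(f)/\|f\|_p^p$. Substituting this into $E'(0)$ one checks that $E'(0)S(0) = E(0)S'(0)$, so $H_u'(0) = 0$. Differentiating the quotient rule once more,
$$H_u''(0) = \frac{E''(0)S(0) - E(0)S''(0)}{S(0)^2} - \frac{2(E'(0)S(0) - E(0)S'(0))S'(0)}{S(0)^3},$$
and the second term vanishes by the relation just established. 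Dividing by two and inserting the formulas for $E''(0)$ and $S''(0)$ produces the stated identity.

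The computation is essentially routine; the only two points that warrant care are the legitimacy of the integration by parts (guaranteed by the compact support of $u$ rather than by compactness of $X$) and the algebraic simplification at the second-derivative stage, where recognising that the cross terms collapse precisely because of $H_u'(0) = 0$ is what produces the clean form displayed in the lemma.
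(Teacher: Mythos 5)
Your proposal is correct and follows essentially the same route as the paper: a direct second-order expansion of the quotient $E_h(f+tu)/\|f+tu\|_p^2$, with $H_u'(0)=0$ following from criticality (equivalently, the Yamabe equation), and the quotient rule producing the stated formula. The only difference is organizational—you expand numerator and denominator separately and invoke the quotient rule, while the paper writes out $H'(t)$ explicitly and differentiates once more—so no substantive comparison is needed.
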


 \begin{proof} By a standard computation

$$ H'(t)= 2 \frac{ \left(   \int_X \ ( a_N (h(\nabla f ,\nabla u )+ t \| \nabla u \|^2 ) + s_h (fu + tu^2 ))  \ dv_h  \right) \times {\| f+tu \|}_p^2}{ {\| f+tu \|}_p^4 } $$

$$ -2
\frac{ (\int_X \  (a_N \| \nabla (f+tu) \|^2 + s_h (f+t u)^2) dv_h ) \times {\| f+tu \|}_p^{2-p} \times \int_X  (f+tu)^{p-1} u  \ dv_h }{ {\| f+tu \|}_p^4 } .
$$

$H'(0)=0$ since $f$ in a critical point and then by a direct computation

$$H''(0)= \left( \frac{ 2}{ {\| f \|}_p^4 }   \int_X \ ( a_N   \| \nabla u \|^2 + s_h u^2 ) dv_h \right)   \times {\| f \|}_p^2  $$

$$- 2 \frac{E_h(f)}{ \| f \|_p^4}    \left( \frac{2}{p} -1 \right) {\left( \int_X f^p dv_h  \right)}^{2/p -2}  p {\left( \int_X  f^{p-1} u dv_h \right)}^2 $$

$$-2 \frac{E_h (f)}{\| f \|_p^4} (p-1) {\| f \|}_p^{2-p} \int_X f^{p-2} u^2 dv_h      $$

\end{proof}

\begin{Definition} A critical point of the Yamabe functional $Y_h$ is called {\it stable}  if for
each $u\in C_0^{\infty} (M)$ one has $H_u ''(0) \geq 0$.
\end{Definition}

Of course  local  minimizers are stable critical points of $Y_h$. 

The previous lemma now reads:

\begin{Corollary} $f$ is a stable critical point of $Y_h$ if and only if for any  $u\in C_0^{\infty} (X)$

$$  E_h (u)   \geq  E_h (f)
\left( (2-p) \| f \|_p^{-2p} \left( \int_X f^{p-1}u  \right)^2 + (p-1) \| f \|_p^{-p} \int_X f^{p-2}u^2  \right)  .$$

\end{Corollary}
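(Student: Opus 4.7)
The plan is very direct: Corollary 2.3 is essentially a reformulation of the stability condition using the explicit expression for $H_u''(0)$ provided by Lemma 2.1. So no new analytic input is needed; the work is purely algebraic.

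First I would invoke Definition 2.2: $f$ is stable precisely when $H_u''(0) \geq 0$ for every $u \in C_0^\infty(X)$. By Lemma 2.1, this is equivalent to
\[
\frac{E_h(u)}{\|f\|_p^2} \;\geq\; \frac{E_h(f)}{\|f\|_p^4}\left( (2-p)\,\|f\|_p^{2-2p}\!\left(\int_X f^{p-1}u\right)^2 + (p-1)\,\|f\|_p^{2-p}\int_X f^{p-2}u^2 \right).
\]
Next I would clear denominators by multiplying both sides by $\|f\|_p^2>0$, so that the left-hand side becomes $E_h(u)$ and on the right the outer factor becomes $E_h(f)/\|f\|_p^2 \cdot \|f\|_p^{-2\text{(extra)}}$ — concretely, the two interior powers simplify as $\|f\|_p^{2-2p}\cdot\|f\|_p^{-2} = \|f\|_p^{-2p}$ and $\|f\|_p^{2-p}\cdot\|f\|_p^{-2} = \|f\|_p^{-p}$, which are exactly the exponents appearing in the statement. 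This yields
\[
E_h(u) \;\geq\; E_h(f)\left( (2-p)\,\|f\|_p^{-2p}\!\left(\int_X f^{p-1}u\right)^2 + (p-1)\,\|f\|_p^{-p}\int_X f^{p-2}u^2 \right),
\]
which is the asserted inequality.

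There is no real obstacle: the only point that requires minimal care is the bookkeeping of the exponents of $\|f\|_p$ when dividing the lemma's formula by $\|f\|_p^2$, and verifying that the sign conventions match — in particular that the negative coefficient $(2-p)$ (note $p>2$) is preserved on the right-hand side, since the stability condition is an inequality, not an identity, and the signs of the individual terms in the quadratic form are what ultimately govern whether the inequality can be sharp. I would close by remarking that since $C_0^\infty(X)$ is dense in $L_1^2(X)$ and both sides are continuous in $u$ with respect to the $L_1^2$-norm (under the Sobolev embedding assumption on $(X,h)$), the criterion can equivalently be tested on all $u\in L_1^2(X)$, which is the form used in the subsequent sections.
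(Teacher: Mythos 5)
Your proposal is correct and is exactly the argument the paper intends: the corollary is stated as an immediate algebraic restatement of Lemma 2.1 (the paper gives no separate proof beyond ``the previous lemma now reads''), and your bookkeeping of the exponents $\| f \|_p^{2-2p}\cdot \| f \|_p^{-2}=\| f \|_p^{-2p}$ and $\| f \|_p^{2-p}\cdot \| f \|_p^{-2}=\| f \|_p^{-p}$ after multiplying through by $\| f \|_p^{2}>0$ is accurate. The closing density remark is a harmless addition that anticipates Corollary 2.4 but is not needed for the statement as given.
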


Note that equality holds for $u=f$ since in that case $H_f$ is actually a constant function. Usually one restricts $Y_h$ to metrics
of some fixed volume. In terms of the function $u$ this means that we would consider $u$ such that $\int_X f^{p-1} u =0$. In this situation one 
would have:

\begin{Corollary} A critical point  $f$  of $Y_h$ is stable iff for all $u\in L_1^2 (X)$ such that $\int_X f^{p-1} u dv_h =0$  one has
$E_h (u) \geq (p-1) E_h (f) \| f \|_p^{-p} \int_X f^{p-2} u^2 dv_h $. 
\end{Corollary}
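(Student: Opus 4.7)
The plan is to show that the unconstrained stability inequality of Corollary 2.3 is equivalent to its restriction to the codimension-one subspace $\{u : \int_X f^{p-1}u\,dv_h = 0\}$. The forward direction is immediate: if $u$ satisfies the constraint, the $(2-p)$-summand on the right-hand side of Corollary 2.3 vanishes, leaving exactly the stated inequality.

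For the reverse direction, I would decompose an arbitrary test function along the direction of the critical point itself. Given $u \in C_0^\infty(X)$, set
$$\alpha = \frac{1}{\|f\|_p^p}\int_X f^{p-1}u\,dv_h, \qquad v = u - \alpha f,$$
so that $\int_X f^{p-1}v\,dv_h = 0$ by construction. The crucial observation is that $f$ and $v$ are orthogonal with respect to the bilinear form associated to $E_h$: multiplying the Yamabe equation $-a_N\Delta_h f + s_h f = \lambda f^{p-1}$ (with $\lambda = E_h(f)/\|f\|_p^p$ obtained by pairing with $f$) against $v$ and integrating by parts gives
$$\int_X\bigl(a_N\langle\nabla v,\nabla f\rangle + s_h vf\bigr)\,dv_h = \lambda\int_X vf^{p-1}\,dv_h = 0.$$
Consequently the energy splits as $E_h(u) = E_h(v) + \alpha^2 E_h(f)$ with no cross term.

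Substituting $u = v + \alpha f$ into the right-hand side of Corollary 2.3 and using the orthogonality, I compute $\int_X f^{p-1}u\,dv_h = \alpha\|f\|_p^p$ and $\int_X f^{p-2}u^2\,dv_h = \int_X f^{p-2}v^2\,dv_h + \alpha^2\|f\|_p^p$. The two coefficients combine via $(2-p)\alpha^2 + (p-1)\alpha^2 = \alpha^2$, so the right-hand side collapses to $\alpha^2 E_h(f) + (p-1)E_h(f)\|f\|_p^{-p}\int_X f^{p-2}v^2\,dv_h$. The $\alpha^2 E_h(f)$ piece cancels the corresponding summand on the left, and the full unconstrained inequality is reduced to precisely the inequality of Corollary 2.4 applied to $v$. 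Hence the two conditions are equivalent on $C_0^\infty(X)$.

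The one step that requires genuine care, rather than formal manipulation, is the extension from $C_0^\infty(X)$ to $L_1^2(X)$ as stated. I would approximate $u \in L_1^2(X)$ by $u_k \in C_0^\infty(X)$ in the $L_1^2$-norm and verify continuity of $E_h$, of the linear functional $u \mapsto \int_X f^{p-1}u\,dv_h$, and of the quadratic form $u \mapsto \int_X f^{p-2}u^2\,dv_h$ in this topology. Continuity of the last two follows from H\"older's inequality combined with $f \in L^p(X)$ (a consequence of $f \in L_1^2(X)$ and the assumed Sobolev embedding $L_1^2 \hookrightarrow L^p$); the orthogonality constraint $\int_X f^{p-1}u\,dv_h = 0$ is preserved in the limit because it is continuous in the $L^p$-topology. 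This density step is the main technical point, but it is routine given the hypotheses made on $(X,h)$ in the introduction.
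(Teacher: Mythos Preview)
Your proof is correct and follows essentially the same route as the paper: decompose an arbitrary test function as a multiple of $f$ plus its projection onto the constrained subspace, use the Yamabe equation to eliminate the cross term in $E_h$, and verify that the inequality of Corollary~2.3 reduces exactly to the constrained inequality (the paper carries out the same computation with the roles of the letters $u$ and $v$ swapped). If anything, your treatment is slightly more careful than the paper's, which glosses over the $C_0^\infty$-to-$L_1^2$ density step that you explicitly justify via H\"older and the Sobolev embedding.
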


\begin{proof} It is clear that if $f$ is stable then one has the required inequality. Now assume that 
the inequality is true for each $u\in L_1^2 (X)$ such that $\int_X f^{p-1} u dv_h =0$.
 Each $v \in  L_1^2 (X)$ can be written as $v=u+ cf$ where  $u\in L_1^2 (X)$ verifies that  $\int_X f^{p-1} u dv_h =0$ 
and $c\in \re$. Note that then $c= \| f \|_p^{-p} \int_X  f^{p-1} v dv_h$.

Then 

$$E(v) = \int_X \ ( a_N \| \nabla (u+cf) \|^2 + s_h (u+cf)^2)  \  dv_h $$

$$=  \int_X \  a_N \| \nabla u \|^2  -2a_N c u \Delta f + a_N c^2 \| \nabla f \|^2+ s_h u^2 +2cs_h uf + s_h c^2  f^2 \  dv_h $$

$$=E(u) + c^2 E(f)$$

\noindent
(using for the last equality that $-a_N \Delta f + s_h f = \lambda  f^{p-1}$). Then

$$E(v) \| f \|_p^p = ( E(u) + c^2 E(f)  ) \| f \|_p^p  \geq E(f)   (p-1)   \int_X f^{p-2}u^2 dv_h + c^2 E(f) \| f \|_p^p$$

$$ = (p-1) E(f)   \int_X f^{p-2}(v-cf)^2 dv_h + c^2 E(f) \| f \|_p^p .$$ 

$$ =  (p-1) E(f)   \int_X f^{p-2}v^2 dv_h  -2c  (p-1) E(f)  \int_X f^{p-1} v dv_h+ p  c^2 E(f) \| f \|_p^p .$$

\noindent
And replacing the value of $c$ we obtain:

$$E(v) \| f \|_p^2 \geq     (p-1) E(f)   \int_X f^{p-2}v^2 dv_h + E(f) \| f \|_p^{-p}   \left(  \int_X f^{p-1} v dv_h \right)^2 (2-p) $$

This shows that $f$ is a stable critical point.

\end{proof}

Given   a complete Riemannian manifold $(X,h)$ and  $f\in C_+^{\infty} (X) \cap L_1^2 (X)$ a positive smooth critical point of $Y_h$ we let
as in the introduction $N(h,f) =  \{u \in L_1^2 (X) - \{ 0 \}: \int_X f^{p-1} u dv_h =0 \}$ and  call

$$\alpha (X,h,f) = \inf_{u\in N(h,f) } \ \frac{E_h (u)}{ \int_X f^{p-2} u^2 dv_h } .$$

With this notation we have that $f$ is a stable solution of the Yamabe equation if and only if

$$\alpha (X,h,f) \geq (p-1) \frac{E_h (f)}{ \|  f \|_p^p } $$

\noindent
as claimed in Theorem 1.2.

In the next sections we will consider the particular case when $(X,h)=(M\times \re^n ,g+g_E^n )$, a Riemannian product of a closed
Riemmanian manifold of constant positive scalar curvature with the Euclidean space, and
$f$ a critical point of $Y_h$ which is a smooth radial decreasing positive function on $\re^n$. We will
use the fact that $\alpha$ is achieved : 

\begin{Proposition} There exists $u\in N(g+ g_E^n ,f)$  which achieves the infimum in the definition of $\alpha (M \times  \re^n ,g+g_E^n ,f) $. Every minimizer
is a  smooth  function which solves the equation

\begin{equation}
\label{2.6}
-a_n \Delta u + (s_g -\alpha  f^{p-2} ) u =0 
\end{equation}

The space of solutions of this equation is finite dimensional.
\end{Proposition}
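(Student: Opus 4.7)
The plan is to obtain the minimizer by the direct method, exploiting that $s_g > 0$ and that $f^{p-2}$ vanishes at infinity on $\re^n$; the Euler-Lagrange equation, regularity and finite-dimensionality are then essentially routine consequences. Since $g$ has constant scalar curvature $m(m-1)>0$ and $s_{g+g_E^n}=s_g$, the quadratic form $E_h$ is equivalent to the square of the Sobolev norm on $L_1^2(M\times\re^n)$; the linear constraint $\int f^{p-1} u\,dv_h=0$ defines a closed subspace of $L_1^2$, since $f^{p-1}\in L^{p/(p-1)}$ and $L_1^2\hookrightarrow L^p$. Take a minimizing sequence $u_k\in N(h,f)$ normalized by $\int f^{p-2} u_k^2\,dv_h=1$. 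Then $E_h(u_k)$ is bounded, so $\|u_k\|_{L_1^2}$ is bounded, and after extracting a subsequence $u_k\rightharpoonup u$ weakly in $L_1^2$, with the linear constraint passing to the limit.

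The crux is to show $\int f^{p-2}u_k^2\to\int f^{p-2}u^2$, i.e.\ that the inclusion $L_1^2(M\times\re^n)\hookrightarrow L^2(f^{p-2}\,dv_h)$ is compact. Given $\e>0$, use that $f$ is smooth, positive and lies in $L_1^2(\re^n)$ (hence decays at infinity) to pick $R$ with $f^{p-2}(y)<\e$ whenever $|y|>R$. Then
\[
\int_{M\times\{|y|>R\}} f^{p-2}(u_k-u)^2\,dv_h \;\leq\; \e\,\|u_k-u\|_{L^2}^2 \;\leq\; C\e,
\]
while on the bounded domain $M\times B_R$ the Rellich-Kondrachov theorem gives strong $L^2$-convergence $u_k\to u$, so $\int_{M\times B_R}f^{p-2}(u_k-u)^2\,dv_h\to 0$. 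Hence $\int f^{p-2}u^2=1$ and $u\in N(h,f)$, and weak lower semicontinuity of $E_h$ shows $u$ attains $\alpha$.

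Any such minimizer must satisfy $-a_N\Delta u+s_g u=\alpha f^{p-2}u+\beta f^{p-1}$ for some Lagrange multiplier $\beta\in\re$ associated with the linear constraint. Pairing with $f$, integrating by parts, and using both that $f$ satisfies $-a_N\Delta f+s_g f=\lambda f^{p-1}$ and that $\int f^{p-1} u=0$, the left-hand side vanishes; the right-hand side collapses to $\beta\|f\|_p^p$, forcing $\beta=0$ and yielding equation \eqref{2.6}. Smoothness of $u$ then follows from standard elliptic bootstrap applied to \eqref{2.6}, since the coefficient $f^{p-2}$ is smooth.

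For finite-dimensionality, on $L_1^2$ endowed with the equivalent inner product $\langle u,v\rangle_{E_h}=\int(a_N\,h(\nabla u,\nabla v)+s_g uv)\,dv_h$ define the operator $T$ by letting $Tv$ be the unique element with $-a_N\Delta(Tv)+s_g(Tv)=f^{p-2}v$. Then $T$ is self-adjoint, and the tail-plus-Rellich argument above shows $T$ is compact, so each of its eigenspaces is finite-dimensional; in particular the $1/\alpha$-eigenspace, which is exactly the space of $L_1^2$-solutions of \eqref{2.6}. The principal obstacle throughout is the non-compactness of $M\times\re^n$: the whole argument rests on the decay of $f^{p-2}$ at infinity, which is what turns the relevant embedding into a compact one; a secondary subtlety is the elimination of the Lagrange multiplier $\beta$, which works precisely because $f$ itself solves the Yamabe equation.
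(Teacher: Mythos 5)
Your proposal is correct, and its core existence argument coincides with the paper's: normalize by $\int f^{p-2}u_k^2=1$, extract a weak $L_1^2$ limit, and recover the normalization in the limit by splitting $M\times\re^n$ into $M\times B_R$ (where Rellich--Kondrachov gives strong $L^2$ convergence) and the tail (where the decay of the radial decreasing $f$ makes $f^{p-2}<\e$), together with weak lower semicontinuity of $E_h$. You differ from the paper in two places, in both cases to your advantage. First, in deriving the Euler--Lagrange equation the paper simply differentiates the quotient $E_h(u+t\varphi)/\int f^{p-2}(u+t\varphi)^2$ for \emph{all} $\varphi\in C_0^\infty$, tacitly ignoring that the minimization is constrained to $N(h,f)$; you introduce the Lagrange multiplier $\beta$ for the linear constraint and eliminate it by pairing with $f$ and using $-a_N\Delta f+s_hf=\lambda f^{p-1}$ together with $\int f^{p-1}u=0$, which is the honest version of that step (one only needs to note that the integration by parts against $f$ is legitimate, which follows from $u,f\in L_1^2$ and the decay of $f$). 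Second, for finite-dimensionality the paper argues by contradiction with a sequence of pairwise $L^2$-separated ``minimizers'' and claims strong $L^2(X)$ convergence of a subsequence, which its own compactness argument only delivers in the weighted space $L^2(f^{p-2}\,dv_h)$, not in $L^2(X)$; your route through the compact self-adjoint operator $T$ (defined by $\langle Tv,w\rangle_{E_h}=\int f^{p-2}vw$, so that $L_1^2$-solutions of the equation form the $1/\alpha$-eigenspace, with $\alpha>0$ because $s_h>0$) is cleaner and avoids that issue entirely. The one point you state a bit glibly is that $f\in L_1^2(\re^n)$ ``hence decays at infinity''; this is not true for general Sobolev functions, but it is true here because $f$ is the radial, decreasing Gidas--Ni--Nirenberg/Kwong solution, which is exactly the property the paper invokes, so the gap is cosmetic.
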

\begin{proof} Let $\{ u_i \}$ be a minimizing sequence.  We can assume that $\int_X f^{p-2}u_i^2 dv_h =1$ and $u_i \geq 0$. It
follows that $\{ u_i \}$ is a bounded sequence in $L_1^2 (X)$ and therefore (after taking a subsequence) it has a weak limit $u|_K$ in $L_1^2 (K)$, for every compact $K \subset X$, $u|_K\geq 0$. Also, $u_i $ converges to $u|_K$ in $L^2 (K)$, since the Sobolev embedding is compact for $K \subset X$, and by H\"older's inequality.

Consider now compact subsets $K_R= M \times B_R \subset X$ ($B_R\subset \re^n$ a closed ball with radius $R>0$). Since the convergence on $L^2(K_R)$ is strong for each $R$, $K_{R}\subset K_{R'}$ for $R<R'$, and  $X=\cup_i^{\infty}K_i$, then we have a well defined function on all of $X$, $u=\lim_{R\rightarrow \infty} u|_{K_R}$.

Furthermore, on each compact $K_R$

$$\int_{K_R} |\nabla u|^2 dv_h = \lim_{i\rightarrow \infty} \int_{K_R} \langle\nabla u, \nabla u_i\rangle_h dv_h $$
 \noindent and then, by the Cauchy inequality,
$$\int_{K_R} |\nabla u|^2 dv_h \leq \limsup_{i\rightarrow \infty} \int_{K_R}  |\nabla u_i|^2 dv_h $$

Moreover, by the strong convergence on $L^2(K_R)$

$$\int_{K_R} u^2 dv_h = \lim_{i \rightarrow \infty} \int_{K_R}  u_i^2 dv_h. $$

It follows that 

\begin{equation}
\int_{K_R}(a |\nabla u|^2 +s_h u^2)dv_h\leq\limsup_{i\rightarrow \infty} \int_{K_R} (a |\nabla u_i|^2+s_h u_i^2)dv_h 
\nonumber
\end{equation}
\begin{equation}
\label{EE}
\leq \limsup_{i\rightarrow \infty} \int_{X} (a |\nabla u_i|^2+s_h u_i^2)dv_h \leq \limsup_{i\rightarrow \infty} E_h(u_i)= \alpha.
\end{equation} 

\noindent Then, by making $R\rightarrow \infty$, inequality (\ref{EE}) implies that $E_h(u)\leq \alpha$. Since $\alpha$ is an infimum by definition, it remains to show that $\int_X f^{p-2}u^2 dv_h=1$, to prove that $u$ in fact minimizes $\frac{E_h (u)}{\int_X f^{p-2} u^2 dv_h}$.

This follows from the fact that $f$ is radially dependent on $\re^n$  and decreasing. Given $\epsilon>0$, then, for big $R$, we have $f^{p-2}(r)<\epsilon$, for $r>R$. Hence
$$\int_{X \setminus M\times B_r} u_i^2 f^{p-2} dv_h \leq \epsilon \int_{X \setminus M\times B_r} u_i^2 dv_h \leq \epsilon \int_X u_i^2 dv_h\leq C \epsilon,$$
\noindent for some constant $C$ (recall that $\{ u_i \}$ is a bounded sequence in $L_1^2 (X)$). It follows that for every $r>R$

$$ 1 \geq \lim_{i\rightarrow \infty} \int_{M\times B_r} f^{p-2} u_i^2 dv_h \geq 1- C \epsilon,$$
\noindent that is

$$ 1\geq \int_{M\times B_r} f^{p-2} u^2 dv_h \geq 1- C \epsilon.$$

\noindent Finally, by making $r \rightarrow \infty$, we have $\int_{X} f^{p-2} u^2 dv_h=1$. As stated, this proves that $u$ minimizes $\frac{E_h (u)}{\int_X f^{p-2} u^2 dv_h}$.

Of course, this implies that $\forall$  $\varphi \in C_0^{\infty}(X)$, 
$\frac{d}{dt}\left(\frac{E_h(u+t\varphi)}{\int_X f^{p-2}(u+t \varphi)^2 dv_h}\right)\big|_{t=0}=0.$ That is,
$$ \frac{ 2 a_{m+n}\int_X ( \langle\nabla \varphi,\nabla u\rangle_h+2 s_h \varphi u) \, dv_h}{\left(\int_X f^{-2+p} u^2 \, dv_h\right)^{2/p}}-2 \left(\int_X f^{-2+p} \varphi u \, dv_h\right)  \frac{ \int_X \left(a_{m+n} \nabla u+s_h u^2\right) \, dv_h}{\left(\int_X f^{p-2} u^2 \, dv_h\right)^2}=0,$$
\noindent it follows that
$$   a_{m+n}\int_X ( \langle\nabla \varphi,\nabla u\rangle_h+ s_h \varphi u) \, dv_h- \left(\int_X f^{p-2} \varphi u \, dv_h\right)  \frac{E_h(u)}{\int_X f^{p-2} u^2 \, dv_h}=0,$$
\noindent and then
$$  \int_X \varphi \left( - a_{m+n} \Delta u+ s_h  u - \alpha f^{p-2}  u \right)\, dv_h =0,$$
\noindent for every $\varphi \in C_0^{\infty}(X)$. That is, 
$u$ is a weak solution of equation (\ref{2.6}). The fact that $u$ is a smooth function, follows from standard regularity results (see for example Theorem 4.1 in \cite{parker}).

Finally, we remark that the space of solutions is finite dimensional. Suppose it were infinite dimensional, then we would have a sequence $\{u_i\}$ of minimizers, such that  $\int_X f^{p-2}u_i^2 dv_h =1$, $u_i \geq 0$ and $||u_i-u_k||_2>\epsilon$, for every $i,k$, and for some $\epsilon>0$. By applying  the argument of the proof to this sequence, we would have strong $L^2(X)$ convergence of a subsequence of $\{u_i\}$ to some $L^2(X)$ function $u_0$, contradicting the hypothesis that $||u_i-u_k||_2>\epsilon$.

\end{proof}

\section{The $Y_{\re^n}$-minimizers on $(M\times \re^n , g + g_E^n )$}

We consider a closed Riemannian manifold $(M,g)$ of constant positive scalar curvature. We use the notation
$g_E^n$ for the Euclidean metric on $\re^n$. We will assume always that $m, n \geq 2$.

In general if $(Z,G)=(M_1 \times M_2 , g+h )$ is a Riemannian product we consider as in \cite{Akutagawa} the 
restricion of $Y_G$ to functions on one of the variables and let

$$Y_{M_i} (Z,G) = \inf_{u \in L_1^2 (M_i )} Y_G (u) .$$

In \cite[Theorem 1.4]{Akutagawa} it was proved that $Y_{\re^n} (M \times \re^n , g + g_E^n )$ can be computed in
terms of the best constant in the Gagliardo-Nirenberg inequality.  
The Gagliardo-Nirenberg inequality says that there exists a positive constant $\sigma$ such that for all $u\in L_1^2 (\re^n )$

$$\| u \|_{p_{m+n}}^2 \leq \sigma \| \nabla u \|_2^{\frac{2n}{m+n}} \| u \|_2^{\frac{2m}{m+n}} .$$

The best constant is of course the smallest value $\sigma_{m,n}$ that makes the inequality true:

$$\sigma_{m,n} = \left( \inf_{u\in L_1^2 (\re^n ) - \{ 0 \} } \frac{  \| \nabla u \|_2^{\frac{2n}{m+n}} \| u \|_2^{\frac{2m}{m+n}}   }{\| u \|_{p_{m+n}}^2 } \right)^{-1} .$$

The infimum is actually achieved. The minimizer is a solution of the Euler-Lagrange equation of the functional in parenthesis:

\begin{equation}
- n \Delta u + m \frac{ \| \nabla u \|_2^2}{\| u \|_2^2 } u - (m+n) \frac{ \| \nabla u \|_2^2 }{\| u \|_p^p} u^{p-1} =0 .
\end{equation}

By invariance if a function $u$ is a minimizer so is $cu_{\lambda}$ given by $ cu_{\lambda} (x)= cu(\lambda x)$ for any constants $c, \lambda \in\re_{>0}$.
In terms of equation (6) this means that a solution $u$ gives a 2-dimensional family of solutions. By picking $c, \lambda$
appriopriately we can choose the (constant) coefficients appearing in the equation. In particular one would have
a solution of 

\begin{equation}
-\Delta u + u -u^{p-1} =0.
\end{equation}

It is known since the classical work of Gidas-Ni-Nirenberg \cite{Gidas, Ni} that all  solutions of equation (7) which are 
positive and vanish at infinity are radial functions. It is also known the existence of a radial solution \cite{Strauss}. Moreover,
M. K. Kwong \cite{Kwong} proved that such a solution is unique.

In our situation we will prefer to first choose $\lambda$ so that 
$ a_{m+n} m \| \nabla u \|_2^2    = n s_g \| u \|_2^2$ and
then pick $c$ so that $(m+n)a_{m+n} \| \nabla u \|_2 ^2  = s_g  n  \|  u \|_p^p .$ Then the resulting function
$f_K$ satisfies

\begin{equation}
-a_{m+n} \Delta f_K + s_g \  f_K = s_g \  f_K^{p-1} 
\end{equation}

Note that the function $f_K$ depends on $m,n$ and $s_g$. 
The metric $g_K = f_K^{p-2} (g + g_E^n )$ has scalar curvature $s_{g_K} = s_g $. $g_K$ is a non-complete
metric of finite volume. We will denote the function  $f_K$ by $f=f_K^{m,n, s_g}$ (in case it is necessary to make it explicit the dependence on
$m,n,s_g$). Note that we have:

\begin{equation}
 a_{m+n} m  \| \nabla f_K^{m,n,s_g}  \|_2^2    = n s_g  \| f_K^{m,n,s_g}  \|_2^2
\end{equation}

\begin{equation}
(m+n)a_{m+n} \| \nabla f_K^{m,n,s_g} \|_2 ^2  =   n  s_g  \|  f_K^{m,n,s_g} \|_p^p 
\end{equation}

\begin{equation}
(m+n) \|  f_K^{m,n,s_g} \|_2 ^2  = m  \|  f_K^{m,n,s_g} \|_p^p 
\end{equation}

A minimizer for $Y_{\re^n} (M \times \re^n , g + g_E^n )$ must be a solution of (3). And by the previous comments the
solution is unique, so actually the solution $f_K^{m,n,s_g }$ is the unique minimizer for $Y_{\re^n} (M \times \re^n , g + g_E^n )$.
 We have

$$Y_{\re^n} (M \times \re^n , g + g_E^n ) = s_g  Vol(g_K  )^{\frac{2}{m+n}} .$$

\section{Stability of the  $Y_{\re^n}$-minimizers }

Let $g$ be a Riemannian metric  on the closed $m$-manifold $M$ of constant scalar curvature $s_g = m(m-1)$. To simplify we will use the notation $G=g + g_E^n$, $N=m+n$
Let $f: \re^n  \rightarrow \re_{>0}$ be the unique solution of equation (9) discussed in the previous section.

Note that $E_G (f) = m(m-1) \| f \|_p^p$.

\begin{Lemma} $\alpha = \alpha (M\times \re^n , G,f) < (p-1)m(m-1) $ then it is realized by a function $u(y,x) = a(y) b(x)$ where $a: M \rightarrow \re$ ,  $-\Delta_g a = \lambda_1 a$
(where  $\lambda_1$ is  the first positive eigenvalue) and $b \in L_1^2 (\re^n )$ satisfies the equation:

\begin{equation} 
 -a_N \Delta b +   \left( -a_N  \lambda_1  + m(m-1) - \alpha  f^{p-2}  \right)  b =0.
\end{equation}

\end{Lemma}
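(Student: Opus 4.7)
The plan is to exploit separation of variables in the Euler--Lagrange equation produced by Proposition 2.5. First I would apply that proposition to obtain a minimiser $u\in L_1^2(M\times\re^n)$ and expand it in an $L^2(M,dv_g)$-orthonormal basis $\{\phi_k\}_{k\ge 0}$ of eigenfunctions of $-\Delta_g$, with eigenvalues $0=\mu_0<\mu_1=\lambda_1\le\mu_2\le\cdots$, writing $u(y,x)=\sum_k\phi_k(y)b_k(x)$ for some $b_k\in L_1^2(\re^n)$. Parseval together with Fubini decouples the Rayleigh quotient: setting $A_\mu(b)=\int_{\re^n}[a_N(|\nabla b|^2+\mu b^2)+m(m-1)b^2]\,dx$ and $B(b)=\int_{\re^n}f^{p-2}b^2\,dx$, one obtains $E_G(u)=\sum_k A_{\mu_k}(b_k)$ and $\int_{M\times\re^n}f^{p-2}u^2\,dv_G=\sum_k B(b_k)$, while the constraint $\int f^{p-1}u\,dv_G=0$ reduces to $\int_{\re^n}f^{p-1}b_0\,dx=0$ because $\phi_k$ is $L^2(M)$-orthogonal to $\phi_0$ for $k\ge 1$. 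This presents $\alpha$ as the minimum of the partial infima $\alpha_k^*$ (with the constraint on $b_0$ when $k=0$, unrestricted otherwise).

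The central step is the bound $\alpha_0^*\ge(p-1)m(m-1)$, which together with the hypothesis $\alpha<(p-1)m(m-1)$ forces $b_0\equiv 0$ in any minimiser. I would establish it as follows. By the Akutagawa--Kwong uniqueness results recalled in Section 3, $f$ is a global minimiser of $Y_G$ on the subspace $L_1^2(\re^n)\subset L_1^2(M\times\re^n)$. The second-variation formula of Lemma 2.1 therefore gives, for every $b\in L_1^2(\re^n)$ with $\int f^{p-1}b=0$, the inequality $E_G(b)\ge(p-1)\,E_G(f)\|f\|_p^{-p}\int_{M\times\re^n}f^{p-2}b^2\,dv_G$. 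Dividing through by $\Vol(M)$ and inserting $E_G(f)/\|f\|_p^p=m(m-1)$ (noted at the start of Section 4, from equation (8)) yields $A_0(b)\ge(p-1)m(m-1)B(b)$, i.e.\ $\alpha_0^*\ge(p-1)m(m-1)$. Next, for each fixed $b\ne 0$ the ratio $A_\mu(b)/B(b)$ is strictly increasing in $\mu$, so $\mu\mapsto\alpha_\mu^*$ is non-decreasing; since $\mu_k\ge\lambda_1$ for $k\ge 1$ and $b_0\equiv 0$, this forces $\alpha=\alpha_1^*$.

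To finish I would take a minimiser $b\in L_1^2(\re^n)$ of $A_{\lambda_1}/B$, whose existence follows by a Rellich/concentration argument identical to that of Proposition 2.5 (the positive zeroth-order term $a_N\lambda_1$ only helps coercivity), and any first eigenfunction $a$ with $-\Delta_g a=\lambda_1 a$. Since $\lambda_1>0$ implies $\int_M a\,dv_g=0$, the product $u(y,x)=a(y)b(x)$ meets the orthogonality constraint, and a direct Fubini computation yields $E_G(u)/\int f^{p-2}u^2\,dv_G=A_{\lambda_1}(b)/B(b)=\alpha_1^*=\alpha$, so $u$ realises the infimum. The weighted Euler--Lagrange equation for $b$ as minimiser of $A_{\lambda_1}/B$, with Lagrange multiplier $\alpha$, is precisely the ODE displayed in the statement.

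The main obstacle I anticipate is the bound $\alpha_0^*\ge(p-1)m(m-1)$: it is not a formal consequence of the Euler--Lagrange structure but of the genuine global minimality of $f$ among $\re^n$-dependent variations, ultimately resting on the Gagliardo--Nirenberg/Kwong uniqueness recalled in Section 3. Once that inequality is in hand, the spectral decomposition and the explicit construction of the product minimiser are essentially algebraic.
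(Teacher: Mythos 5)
Your proposal is correct and rests on the same two pillars as the paper's own proof: the existence and regularity of a minimizer from Proposition 2.5, and the key inequality that the constrained infimum over $\re^n$-dependent test functions is at least $(p-1)m(m-1)$, which both you and the authors deduce from the stability of $f$ as the $Y_{\re^n}$-minimizer (Corollary 2.4 restricted to $L_1^2(\re^n)$) and which is exactly what the hypothesis $\alpha<(p-1)m(m-1)$ is there to exclude. Where you differ is in how the separation of variables is obtained. The paper works at the level of the Euler--Lagrange equation: since $f$ depends only on $x$, the finite-dimensional solution space of $-a_N\Delta u+(m(m-1)-\alpha f^{p-2})u=0$ is $\Delta_g$-invariant, so a minimizer is a finite sum $\sum a_i(y)b_i(x)$ over $\Delta_g$-eigenfunctions, each summand is itself a solution, and one then compares the Rayleigh quotients of the individual products. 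You instead decompose the Rayleigh quotient itself, expanding an arbitrary $u$ in the full eigenbasis of $-\Delta_g$ and using Parseval plus the mediant inequality to reduce $\alpha$ to the partial infima $\alpha_k^*$; monotonicity in $\mu$ then pins the relevant mode to $\lambda_1$. Your route avoids relying on the finite-dimensionality of the solution space (which the paper proves only briefly) at the cost of some routine care with the convergence of the spectral series, and it yields slightly more, namely that every minimizer has vanishing zero mode rather than just that some product minimizer exists. One small remark: your Euler--Lagrange equation carries the term $+a_N\lambda_1 b$, which is the correct sign (and the one used in equation (14) of Section 4); the $-a_N\lambda_1$ in the displayed statement of the lemma is evidently a typo, so your equation does not literally match the display but does match what is intended.
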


\begin{proof} By Proposition 2.5 there exists a minimizer and it is a solution of the equation

$$ -a_N \Delta u +  \left(  m(m-1)  - \alpha  f^{p-2}  \right)  u =0 $$

\noindent
(and the space of solutions of the equation is finite dimensional).
Since $f$ depends only on $\re^n$ it follows that if $u$ is a solution of the equation
then $\Delta_{g} u$ is also a solution. Then for each $x\in \re^n$ the function $u(-,x)$ lies in a 
finite dimensional  $\Delta_{g}$-invariant subspace. It follows that there is a finite
number of linearly independent  $\Delta_{g}$- eigenfunctions $a_1 (y),...,a_k (y)$, $\Delta_{g} a_i = \lambda_i a_i$
($\lambda_i \leq 0$),  such that 
$u= \Sigma a_i (y) b_i (x)$ for some functions $b_i  : \re^n \rightarrow \re$. 

But then we have that 

$$\Sigma_{i=1}^k  \left(-a_N ( \lambda_i a_i (y) b_i (x) + a_i (y) \Delta b_i (x) ) +  \    ( m(m-1) - \alpha  f^{p-2} ) \  a_i(y) b_i (x)\right) =0 .$$

But then since the $a_i$ are linearly independent it follows that for each $i$ 

$$ -a_N ( \lambda_i  b_i (x) +  \Delta b_i (x) )+   \left( m(m-1)  - \alpha  f^{p-2}  \right)  b_i (x) =0 .$$ 

So $a_i b_i$ is also a solution for each $i$. We have proved that there is a minimizer of
the form $a(y) b(x)$ with $-\Delta_g a = \lambda a $ for some $\lambda \geq 0$. If $\lambda =0$ we take
$a=1$ and then we must have $\int_{\re^n} bf^{p-1} dx =0$. Since $f$ is a $Y_{\re^n}$-minimizer it is stable when we restrict the
functional to $L_1^2 (\re^n )$. Then restricting the variation to $C_0^{\infty} (\re^n )$ the same inequality as in Corollary 2.3 gives: 

$$ \alpha  (M\times \re^n , G,f) \geq (p-1) \frac{E_G (f)}{\| f \|_p^p} =(p-1) m(m-1)$$

If $\lambda >0$  note that

$$\frac{E_G (ab)}{\int_{\re^n}  f^{p-2} a^2 b^2 } = \frac{ \int_{\re^n} (a_N \| \nabla b \|_2^2 + s_g b^2)}{\int_{\re^n} f^{p-2}b^2} + a_N \lambda \frac{\int_{\re^n}  b^2}{\int_{\re^n} f^{p-2}b^2} .$$

It follows that for the minimizer we must have $\lambda =\lambda_1$ and the lemma follows.

\end{proof}

Therefore $f$ is unstable if and only if 

\begin{equation}
\inf_{b\in L_1^2 (\re^n ) -\{ 0 \} } \ \left( \frac{ \int_{\re^n} (a_N \| \nabla b \|_2^2 + m(m-1) b^2)}{\int_{\re^n} f^{p-2}b^2} + a_N \lambda_1 \frac{\int_{\re^n} b^2}{\int_{\re^n} f^{p-2}b^2} \right) \  < (p-1) m (m-1)
\end{equation}

\noindent
as claimed in Theorem 1.3. 

\begin{Lemma}
For each $\lambda \geq 0$

$$A(\lambda )= \inf_{b\in L_1^2 (\re^n ) -\{ 0 \} } \ \left( \frac{ \int_{\re^n} (a_N \| \nabla b \|_2^2 + s_g b^2)}{\int_{\re^n} f^{p-2}b^2} +  \lambda \frac{\int_{\re^n} b^2}{\int_{\re^n} f^{p-2}b^2}
\right) \   \geq s_g f(0)^{2-p} $$

\noindent
is realized  by a radial decreasing function.
\end{Lemma}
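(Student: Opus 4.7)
The plan is to prove the lower bound by a direct pointwise estimate, and then establish existence of a radial decreasing minimizer by combining Schwarz symmetrization with a weak-compactness argument modeled on the proof of Proposition 2.5. The main technical point will be ruling out loss of mass at infinity in the weighted denominator.

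For the lower bound, I would use that since $f$ is radial decreasing, $f(x)\leq f(0)$ on $\re^n$, and as $p>2$ this gives $f^{p-2}\leq f(0)^{p-2}$ pointwise. Hence for every nonzero $b\in L_1^2(\re^n)$,
$$\frac{\int_{\re^n}(a_N \|\nabla b\|_2^2 + s_g b^2)}{\int_{\re^n} f^{p-2} b^2} \geq s_g \frac{\int_{\re^n} b^2}{\int_{\re^n} f^{p-2} b^2} \geq s_g f(0)^{2-p},$$
and since the second term in $A(\lambda)$ is nonnegative when $\lambda\geq 0$, this gives $A(\lambda)\geq s_g f(0)^{2-p}$.

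For existence, I would first rewrite the functional as the single Rayleigh quotient
$$Q(b) = \frac{\int_{\re^n}(a_N \|\nabla b\|_2^2 + (s_g+\lambda) b^2)}{\int_{\re^n} f^{p-2} b^2}.$$
Given $b\in L_1^2(\re^n)-\{0\}$, let $b^*$ denote the Schwarz symmetric decreasing rearrangement of $|b|$. Equimeasurability preserves $\|b\|_2$; the Polya--Szego inequality gives $\|\nabla b^*\|_2 \leq \|\nabla b\|_2$; and because $f^{p-2}$ is itself radial decreasing, the Hardy--Littlewood rearrangement inequality yields $\int f^{p-2} b^2 \leq \int f^{p-2} (b^*)^2$. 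Therefore $Q(b^*)\leq Q(b)$, and it suffices to minimize over nonnegative radial decreasing functions.

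I would then take such a minimizing sequence $\{b_i\}$ normalized by $\int f^{p-2} b_i^2 = 1$. Since $a_N \|\nabla b_i\|_2^2 + (s_g+\lambda)\|b_i\|_2^2 \to A(\lambda)$, the sequence is bounded in $L_1^2(\re^n)$. After extracting a subsequence, $b_i$ converges weakly in $L_1^2$ and strongly in $L^2_{\mathrm{loc}}$ to some $b\geq 0$, which is radial decreasing as an a.e.\ limit of such functions; weak lower semicontinuity gives $a_N\|\nabla b\|_2^2 + (s_g+\lambda)\|b\|_2^2 \leq A(\lambda)$. The main obstacle is verifying that $\int f^{p-2} b^2 = 1$, i.e.\ that no mass escapes to infinity. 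Here I use that $f\in L_1^2\cap L^p$ is radial decreasing, so $f(R)\to 0$ as $R\to\infty$, and therefore
$$\int_{|x|>R} f^{p-2} b_i^2 \leq f(R)^{p-2}\,\|b_i\|_2^2 \leq C\,f(R)^{p-2}$$
tends to $0$ uniformly in $i$; combined with $L^2_{\mathrm{loc}}$ convergence on $\{|x|\leq R\}$, this yields $\int f^{p-2} b^2 = 1$. Hence $Q(b)\leq A(\lambda)$, so $b$ is a radial decreasing minimizer realizing $A(\lambda)$.
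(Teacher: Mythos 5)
Your proof is correct. The symmetrization step (Hardy--Littlewood for the weighted denominator, P\'olya--Szego for the gradient term, equimeasurability for the $L^2$ norm), the normalization $\int f^{p-2}b_i^2=1$, and the extraction of a weak limit are exactly what the paper does. Where you diverge is in the final compactness step: the paper invokes the compactness of the embedding $L_1^2(\re^n)\subset L^p(\re^n)$ restricted to radial functions (Strauss's lemma), obtains strong $L^p$ convergence $b_i\to b$, and then gets $\int f^{p-2}b_i^2\to\int f^{p-2}b^2$ via H\"older with $f^{p-2}\in L^{p/(p-2)}$. You instead rule out loss of mass at infinity through the tail estimate $\int_{|x|>R}f^{p-2}b_i^2\le f(R)^{p-2}\|b_i\|_2^2\le C\,f(R)^{p-2}\to 0$ combined with Rellich compactness on balls, which is precisely the mechanism used in the paper's Proposition 2.5. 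Both routes are sound; yours has the minor advantage of not needing the radial compact-embedding theorem (it would in fact produce a minimizer from any bounded minimizing sequence, since the weight $f^{p-2}$ is anchored at the origin and decays, with radiality of the minimizer then supplied by the rearrangement step), while the paper's is a one-line appeal to a standard result once attention is restricted to radial functions. You also supply the pointwise argument $f^{p-2}\le f(0)^{p-2}$ for the stated lower bound $A(\lambda)\ge s_g f(0)^{2-p}$, which the paper asserts without proof; that argument is correct.
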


\begin{proof} Given any $b\in L_1^2 (\re^n ) -\{ 0 \} $ let $b^*$ be its radial decreasing rearrangement. Then since $f$ is also radial and decreasing
we obtain from the Hardy-Littlewood inequality that $\int_{\re^n} f^{p-2}b^2 \leq  \int_{\re^n} f^{p-2}{b^*}^2$. And as usual
$\int b^2 = \int {b^*}^2$ and $  \| \nabla b^*  \|_2^2  \leq   \| \nabla b \|_2^2 $. It follows that for the minimization we can consider only
radial decreasing functions. 
Let $b_i$ be a sequence of radial decreasing functions  such that the corresponding quotient converges
to the infimum. We can normalize de sequence so that $\int f^{p-2} b_i^2  =1$. Then $b_i$ is a bounded sequence in
$L_1^2$ which must have a subsequence converging to $b\in L_1^2$. Since the embedding $L_1^2 \subset L^p$ restricted to
radial functions is compact it follows that the sequence converges to $b$ in $L^p$. But then $\int f^{p-2}b_i^2 \rightarrow
\int f^{p-2} b^2$. It follows that $b$ is a minimizer.

\end{proof}

Since the infimum is realized it follows easily that the infimum is a strictly increasing
function of $\lambda$. Setting $b=f$ for $\lambda =0$ we see that in this case the infimum is at most $m(m-1)$ and of course the infimum tends to
$\infty$ as $\lambda \rightarrow \infty$. 

Therefore there exists a unique value
of $\lambda >0$ such that $A(\lambda ) = (p-1)m(m-1)$, as claimed in Corollary 1.4. This value of $\lambda$ was called
$\lambda (m,n)$ in the introduction and Theorem 1.6 follows from the previous comments.

\vspace{1cm}

The value of $\lambda (m,n)$ can be computed numerically, but since the function $f$ (and correspondingly the
best constant in the Gagliardo-Nirenberg inequality) can only be computed numerically it seems that there is little hope to
obtain an explicit computation of it. To carry on the numerical computation we note that the minimizer $b$ is a solution
of 

$$-a_N \Delta b +(m(m-1) + a_N \lambda (m,n) )b = (p-1)m(m-1) f^{p-2} b .$$
 
In general consider the equation 

\begin{equation}
- \Delta b + Kb = Cf^{p-2}b ,
\end{equation}

\noindent
where $C=(p-1)m(m-1)/a_N $ and $K$ is a (variable)  positive constant.  A radial solution is given by a solution of the ordinary
linear differential equation:

\begin{equation} u''(t) + \frac{n-1}{t} u'(t) +(C f^{p-2} -K) u(t)=0
\end{equation}

\noindent
with $u(0)=1$, $u'(0)=0$.

Note that $u''(0)= (1/n)(K-Cf^{p-2} (0))$. We take   $K<Cf^{p-2} (0)$ so that  the
solution $u$ is decreasing close to 0.  We will denote the 
solution $u$ by $u_K$. We have 3 possibilities:

a) $u_K$ is always decreasing and positive.

b) $u_K (t) =0$ for some $t>0$.

c) $u_K $ has a local minimum at some $t\geq t_0$.

It is easy to see that in  case (a) we have $\lim_{t\rightarrow \infty} u_K (t) = 0$.

By Sturm comparison, as stated for instance in \cite[Lemma 1, page 246]{Kwong} or in Ince's book \cite{Ince}, we have that if 
$0< K_1 < K_2$ and $t_0 >0$ is such that $u_{K_1}$ and $u_{K_2}$ are positive on $[0,t_0 )$ then for all $t\in (0,t_0 )$ we
have 

$$\frac{u_{K_1}'}{u_{K_1}}  < \frac{u_{K_2}'}{u_{K_2}} .$$

It follows that if the solution $u_{K_1}$ verifies (c) then the solution $u_{K_2}$ also verifies (c). If $u_{K_2}$
verifies (b) then $u_{K_1}$ also verifies (b). Moreover if $u_{K_2}$ verifies (a) then $u_{K_1}$ verifies (b).

It follows that for $\lambda = \lambda (m,n)$ the equation 

\begin{equation} u''(t) + \frac{n-1}{t} u'(t) +\left(  \frac{(p-1)m(m-1)}{a_N}  f^{p-2} - \left( \frac{m(m-1)}{a_N} + \lambda  \right) \right) u(t)=0
\end{equation}

\noindent
 is positive and decreasing. 
For $\lambda >  \lambda (m,n)$ the solution has a local minimum and for $\lambda <  \lambda (m,n)$ has a 0 at finite time. The function $f$ 
can be computed numerically (see for instance the discussion in \cite{Akutagawa}) and then for a fixed $\lambda$ one can compute numerically
the solution of (16) and check whether  $\lambda <  \lambda (m,n)$ or $\lambda >  \lambda (m,n)$. 
 In this way one can numerically compute $\lambda_{m,n}$ as mentioned in the introduction.

\end{document}